\setlist[description]{font=\normalfont\itshape\textbullet\space}
\renewcommand{\paragraph}[1]{\vspace{6pt} \noindent \textbf{#1}\xspace}
\numberwithin{equation}{section}
\newtheorem{theorem}{Theorem}[section]
\newtheorem{lemma}[theorem]{Lemma}
\newtheorem{proposition}[theorem]{Proposition}
\newtheorem{claim}[theorem]{Claim}
\newcommand{\IP}{\mathrm{I}}
\newcommand{\TIP}{\mathrm{TI}}
\newcommand{\AP}{\mathrm{Ab}}
  {\end{tabular}\par\medskip}
\theoremstyle{definition}
\newtheorem{definition}[theorem]{Definition}
\newtheorem{example}[theorem]{Example}
\newcommand{\F}{\mathbb{F}}
\newcommand{\Z}{\mathbb{Z}}
\newcommand{\N}{\mathbb{N}}
\renewcommand{\span}{\mathrm{span}}
\newcommand{\tr}[1]{#1^{\mathrm{t}}}
\newcommand{\M}{\mathrm{M}}
\newcommand{\Gr}{\mathrm{Gr}}
\newcommand{\spa}[1]{\mathcal{#1}}
\newcommand{\cA}{\spa{A}}
\newcommand{\cB}{\spa{B}}
\newcommand{\cC}{\spa{C}}
\newcommand{\cG}{\spa{G}}
\newcommand{\qbinom}[3]{\genfrac{[}{]}{0pt}{}{#1}{#2}_{#3}}
\DeclareMathOperator{\id}{id}
\renewcommand{\le}{\leqslant}
\newcommand{\too}%
{\xrightarrow{\text{\raisebox{-3pt}{$\sim$}}\,}}
\title[A $q$-analogue of graph independence polynomials]{
A \lowercase{$q$}-analogue of graph independence polynomials
with a group-theoretic interpretation}
\author{
Youming Qiao
}
\address{
Centre for Quantum Software and Information, University of Technology Sydney.
}
\email{
jimmyqiao86@gmail.com
}
\date{ \today
}
\keywords{}
\subjclass[2020]{}
\thanks{}
\begin{document}


\maketitle

\begin{abstract}
We define totally-isotropic space polynomials of alternating matrix spaces over finite fields, 
by analogy with independence polynomials of graphs. Our main result shows that totally-isotropic 
polynomials of \emph{graphical} alternating matrix spaces give rise to a natural $q$-analogue of graph independence polynomials. 

For $p$-groups of class $2$ and exponent $p$, this family of polynomials over fields of order $p$ can be naturally interpreted as enumerating their abelian subgroups containing the commutator subgroup according to the orders. With this interpretation, our main result has implications to graphical groups over $\F_p$ where $p$ is an odd prime, in the same spirit as the results in (\emph{Bull. Lond. Math. Soc.}, 2022) by Rossmann, who studied enumerating conjugacy classes of graphical groups over finite fields.
\end{abstract}

%


\section{Introduction}


\subsection{Background}

\subsubsection{Graph independence polynomials.} Let $G=(S, E)$ be a finite, simple, and undirected graph, where $S$ is a finite vertex set, and $E\subseteq \binom{S}{2}$ is the edge set. We say that $W\subseteq S$ is an \emph{independent set} of $G$, if $W$ does not contain any edge in $E$. The maximum independent set size of $G$ is denoted by $\alpha(G)$. 

Let $x$ be a variable. For $n\in \N$, $[n]:=\{1, 2, \dots, n\}$. The independence polynomial of $G$ in $x$ is the enumeration polynomial of independent sets of size $i$. That is, for $i\in\N$, let 
$c_i(G)$ be the number of independent sets of size $i$ in $G$. The \emph{independence 
polynomial} of $G$ is 
$$\IP(G, x):=1+\sum_{i\in[\alpha(G)]}c_i(G)\cdot x^i\in \Z[x].$$

Independence polynomials of graphs were first introduced by Gutman and Harary \cite{GH83} and have received considerable attention in combinatorics and physics. We refer the readers to \cite{LM05,CS07,Bar16} for more research on this. 

A basic property of graph independence polynomials is as follows. Let $G_1=(S_1, E_1)$ and $G_2=(S_2, E_2)$ be two graphs with disjoint vertex sets. Their union is then $G=(S, E)$, where $S=S_1\cup S_2$ and $E=E_1\cup E_2$. It is easy to observe (see e.g. \cite{LM05})
\begin{equation}\label{eq:graph_union}
\IP(G, x)=\IP(G_1, x)\cdot \IP(G_2, x).
\end{equation}

\subsubsection{Totally-isotropic spaces of alternating matrix spaces.} Let $\F_q^n$ be the linear space of length-$n$ column vectors over the finite field $\F_q$ of order $q$. We use $\M(n, q)$ to denote the linear space of $n\times n$ matrices over $\F_q$. 

A matrix $B\in\M(n,q)$ is \emph{alternating} if for any $u\in \F_q^n$, $\tr{u}Bu=0$. We use $\Lambda(n, q)$ to denote the linear space of $n\times n$ alternating matrices over $\F_q$. A subspace $\cB$ of $\Lambda(n, q)$ is called an \emph{alternating matrix space}, denoted by $\cB\leq\Lambda(n, q)$. 

\begin{definition}
	Let $\cB\leq\Lambda(n, q)$ be an alternating matrix 
	space. We say that $U\leq \F_q^n$ is a \emph{totally-isotropic space} of $\cB$, if for every $u_1, 
	u_2\in U$, and every $B\in \cB$, $\tr{u_1}Bu_2=0$. 
	
	The maximum totally-isotropic 
	space dimension of $\cB$ is denoted by $\alpha(\cB)$. We use $c_i(\cB)$ to denote the 
	number of $i$-dimensional totally-isotropic subspaces of $\cB$. 
\end{definition}

\subsubsection{Some previous works on totally-isotropic spaces.} Let $p$ be a prime. Recall that a group $P$ is a \emph{$p$-group of class $2$ and exponent $p$}, if $|P|$ is a power of $p$, every $g\in P$ satisfies $g^p=\id$ (the exponent condition), and the commutator subgroup $[P,P]$ is contained in the centre $\mathrm{Z}(P)$ (the nilpotency class $2$ condition). When $p$ is odd, 
$p$-groups of class $2$ and exponent $p$ give rise to alternating bilinear maps, and therefore alternating matrix spaces, via Baer's correspondence \cite{Bae38}. Via this correspondence, totally-isotropic spaces of alternating matrix spaces naturally correspond to certain abelian subgroups of $p$-groups of class $2$ and exponent $p$. As a result, several classical works in group theory, such as \cite{Alp65,Ols78,BGH87}, studied totally-isotropic spaces with group-theoretic implications, as will be explained in more detail in Section~\ref{subsec:group}.

Some recent works studied totally-isotropic spaces of alternating matrix spaces in analogy with independent sets of graphs \cite{BCG+21,Qia20_extremal}. This is due to a natural procedure of associating alternating matrix spaces with graphs, by sending an edge $\{i, j\}\in \binom{[n]}{2}$, $i<j$, to an $n\times n$ elementary alternating matrix $A_{i, j}$ where the $(i, j)$th entry is $1$, the $(j, i)$th entry is $-1$, and the rest being $0$. From a graph $G=([n], E)$ and a field $\F$, this construction produces a \emph{graphical (alternating) matrix space} $\cB_G\leq \Lambda(n, \F)$. It was first used by Tutte \cite{Tut47} and Lov\'asz \cite{Lov79} in the context of graph perfect matchings. 

The correspondence between independent sets and totally-isotropic spaces was first studied in \cite{BCG+21}. For example, one result in \cite{BCG+21} shows that the independence number $\alpha(G)$ is equal to $\alpha(\cB_G)$, the totally-isotropic number of its corresponding graphical matrix space (regardless of the underlying field). This connection was extended to hypergraphs and spaces of alternating multilinear forms \cite{Qia20_extremal}.

More connections between graphs and matrix spaces can be found in \cite{LQWWZ}.

\subsection{Our results: totally-isotropic space polynomials of alternating matrix spaces}

\subsubsection{Enumerating totally-isotropic spaces of graphical matrix spaces.}
Recall that given $\{i, j\}\in 
\binom{[n]}{2}$, $i<j$, the $n\times n$ elementary alternating matrix $A_{i,j}$ is 
the 
matrix with the $(i,j)$th entry being $1$, $(j, i)$th entry being 
$-1$, and other entries being $0$. 

Given a graph $G=([n], E)$ where $E\subseteq\binom{[n]}{2}$, 
let $\cG_q:=\span\{A_{i,j} \mid \{i, j\}\in E\}\leq \Lambda(n, q)$. 

Our first main result is concerned with enumerating totally-isotropic spaces of graphical matrix spaces. The proof of the following theorem is in Section~\ref{sec:main}.
\begin{theorem}\label{thm:main_enum}
	For any $1\leq i\leq \alpha(G)$, there exists $c_i(y)\in\Z[y]$, such that $c_i(q)=c_i(\cG_q)$ for a prime power $q$, and $c_i(1)=c_i(G)$. 
\end{theorem}
Theorem~\ref{thm:main_enum} shows that there exist polynomials $c_i(y)$ that capture simultaneously the cases of the number of size-$i$ independent sets for $G$ with $y=1$, and 
the number of $i$-dimensional totally-isotropic spaces for $\cG_q$ with $y=q$ a prime power.

\subsubsection{Totally-isotropic space polynomials of alternating matrix spaces.} Based on Theorem~\ref{thm:main_enum}, we define totally-isotropic space polynomials for alternating matrix spaces over finite fields, following graph independence polynomials.
As in the graph case, such polynomials are enumeration polynomials of $i$-dimensional totally-isotropic spaces of alternating matrix spaces over $\F_q$. To define such polynomials, there is one twist in the choice of bases of polynomial rings. In the graph setting, the basis is naturally $\{x^i\mid i\in\N\}\subseteq \Z[x]$ and Equation~\ref{eq:graph_union} follows easily. In the alternating matrix space setting, we introduce the following. 

Let $x$ and $y$ be variables. 
In the alternating matrix space setting, we make use of the following basis of $\Z[y][x]$: for $d=0$, set $x_y^d:=1$. For $d\geq 1$, we set
\begin{equation}\label{eq:xyd}
x_y^d:=x\cdot (x-(y-1))\cdot \ldots\cdot (x-(y^{d-1}-1)).
\end{equation}
Note that $x^d$ is the leading term in $x_y^d$, and its coefficient is $1$. Therefore, in the $\Z[y]$-module $\Z[y][x]$, the change-of-basis matrix from $\{x^d\mid d\in \N\}$ to $\{x_y^d\mid d\in\N\}$ is unitriangular. It follows that $\{x_y^d\mid d\in\N\}$ is a basis of $\Z[y][x]$ as a $\Z[y]$-module.

We shall use $x_y^d$ with $y$ assigned as $1$ or a prime power $q$. Note that $x_1^d=x^d$. 

We now define the following.
\begin{definition}
Let $\cB\leq\Lambda(n, q)$ be an alternating matrix space. The \emph{totally-isotropic space polynomial} of $\cB$ is 
$$\TIP(\cB, x):=
1+\sum_{i\in[\alpha(\cB)]}c_i(\cB)\cdot x_q^i\in \Z[x].$$
\end{definition}
Note that $\TIP(\cB, x)$ is an integer polynomial, as by setting $y=q$ in $x_y^i$, $x_q^i$ is an integer polynomial. The reason for using $x_q^i$ in the definition of $\TIP(\cB, x)$ will be clear in the following.

\subsubsection{Totally-isotropic space polynomials as a $q$-analogue of independence polynomials.} 
We now turn to totally-isotropic space polynomials for graphical matrix spaces. 

Let $G$ be a graph and $\cG_q$ be the graphical matrix space of $G$. Recall that $\alpha(\cG_q)=\alpha(G)$ for any prime power $q$ by \cite{BCG+21}. This ensures that $\TIP(\cG_q, x)$ is of the same degree as $\IP(G, x)$. 

Our main result considerably strengthens the above connection. It shows that there exists a polynomial $\IP(G, x, y)\in \Z[y][x]$  
that captures simultaneously the cases of 
$\TIP(\cG_q, x)$ for prime power $y=q$ and $\IP(G, x)$ for $y=1$. The following result follows from Theorem~\ref{thm:main_enum}, the definition of $\TIP(\cB, x)$, and $x_1^d=x^d$.
\begin{theorem}\label{thm:main}
	Let $x$ and $y$ be variables. For a graph $G=([n], E)$, there exists a polynomial  
	$$\IP(G, x, y)=1+\sum_{i\in[\alpha(G)]}c_i(y)\cdot x_y^i\in \Z[y][x],$$ 
	where $c_i(y)\in\Z[y]$, such that $\IP(G, x, y)$ satisfies the following: 
	\begin{itemize}
		\item when $y=q$ is a prime power, $\IP(G, x, q)=\TIP(\cG_q, x)$;
		\item when $y=1$, $\IP(G, x, 1)=\IP(G, x)$.
	\end{itemize}
\end{theorem}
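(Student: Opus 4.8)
The plan is to compute $\TIP(\cG_q, x)$ by stratifying the totally-isotropic subspaces of $\cG_q$ according to their echelon pivot sets, and to show that each stratum contributes a polynomial in $q$ that specializes to $1$ at $q=1$. Throughout I use the characterization that a subspace $U \le \F_q^n$ is totally-isotropic for $\cG_q$ exactly when, for every edge $\{a,b\} \in E$, the coordinate projection $\pi_{ab}\colon U \to \F_q^2$, $u\mapsto (u_a,u_b)$, has rank at most $1$. Indeed $\tr{u}A_{a,b}v = u_a v_b - u_b v_a$, so the totally-isotropic condition on the pair $(a,b)$ says precisely that the vectors $(u_a,u_b)$, $u\in U$, are pairwise proportional.

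First I would put each $U$ into reduced row echelon form and read off its pivot set $P \subseteq [n]$, with $|P| = \dim U$. If two pivots $p,p'\in P$ were joined by an edge, the two corresponding echelon rows would project under $\pi_{pp'}$ to the independent vectors $(1,0)$ and $(0,1)$, violating the rank-one condition; hence $P$ is an independent set of $G$. Writing $N_P(q)$ for the number of totally-isotropic $U$ whose echelon pivot set equals $P$, and noting that the coordinate subspace $\F_q^P$ is always such a $U$ (so $N_P(q)\ge 1$), we get $c_i(\cG_q) = \sum_{P} N_P(q)$, summed over independent sets $P$ with $|P|=i$. The theorem then follows once $N_P(q)$ is shown to be a polynomial in $q$ with $N_P(1)=1$: setting $c_i(q) := \sum_{|P|=i} N_P(q)$ makes $c_i(1)$ count independent sets of size $i$, matching $\IP(G,x)$ at $q=1$ and $\TIP(\cG_q,x)$ at prime powers $q$ (using $\alpha(\cG_q)=\alpha(G)$ from \cite{BCG+21} to fix the range of $i$, and the fact that $\{x_q^i\}$ is a basis to match coefficients).

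The heart is the analysis of $N_P(q)$ for fixed independent $P=\{p_1<\dots<p_i\}$. Each non-pivot column $v_j$ of the echelon matrix lies a priori in the coordinate space $\F_q^{S_j}$ with $S_j=\{k: p_k<j\}$, and the rank-one conditions translate into: (i) an edge from a smaller pivot $p_k$ to $j$ forces $v_j\in\span\{e_k\}$; (ii) an edge from $j$ to a larger pivot forces $v_j=0$; (iii) an edge between two non-pivot columns $a,b$ forces $v_a,v_b$ to be proportional. After imposing (i)--(ii), each non-pivot column is forced to $0$, confined to a coordinate line, or free in $\F_q^{S_j}$; let $F$ be the set of columns not forced to $0$, and let $W_a$ denote the allowed space (coordinate subspace or line) of $a\in F$. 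The main obstacle is that condition (iii) couples columns, so one cannot simply multiply per-column choice counts---the number of admissible $v_j$ genuinely depends on the previously chosen columns. I would resolve this by decomposing over the zero-pattern: for each $Z\subseteq F$ of columns set to $0$, the remaining nonzero columns must be proportional along every edge, so within each connected component $C$ of the subgraph of $G$ induced on $F\setminus Z$ they all lie on one common line $\ell \subseteq W_C := \bigcap_{a\in C} W_a$. Writing $d_C=\dim W_C$, the admissible assignments on $C$ number $\frac{q^{d_C}-1}{q-1}\cdot(q-1)^{|C|}$ (choose the line, then a nonzero scalar per column), and these are independent across components, yielding
\[
N_P(q) \;=\; \sum_{Z\subseteq F}\ \prod_{C}\ \frac{q^{d_C}-1}{q-1}\,(q-1)^{|C|},
\]
the inner product ranging over connected components $C$ of the subgraph induced on $F\setminus Z$.

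This formula makes both required facts transparent. Each $d_C$ is a purely combinatorial quantity independent of $q$, so $N_P(q)\in\Z[q]$, settling polynomiality. Setting $q=1$, every factor $(q-1)^{|C|}$ with $|C|\ge 1$ vanishes, so the only surviving term is the one with $F\setminus Z=\emptyset$, i.e. $Z=F$, whose contribution is the empty product $1$; hence $N_P(1)=1$. (Conceptually, the same vanishing reflects that the locus of admissible columns is a cone, hence has point count $\equiv 1 \pmod{q-1}$.) Summing over independent sets of size $i$ gives $c_i(q)\in\Z[q]$ with $c_i(1)=c_i(G)$, and $\IP(G,x,q):=1+\sum_i c_i(q)\,x_q^i$ has the two claimed specializations.
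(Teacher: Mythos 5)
Your proof is correct and takes essentially the same route as the paper's: your reduced-row-echelon pivot set is exactly the paper's lexicographically-first nonzero Pl\"ucker coordinate $P$, your zero-pattern decomposition over $Z$ matches the paper's stratification by the support set $Q$, and in both arguments each nontrivial stratum contributes a product of factors $q^e-1$ that vanishes at $q=1$, leaving only the coordinate subspace for each independent set. Your single closed formula for $N_P(q)$ does neatly absorb the paper's separate emptiness criteria (Claims~\ref{claim:Q}, \ref{claim:Q1}, \ref{claim:Q0}) as the cases where some $d_C=0$, but this is a presentational rather than a substantive difference.
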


We now explain the reason for using $x_y^d$ as a basis of $\Z[y][x]$.

Let $\cB\leq \Lambda(n_1, q)$ and $\cC\leq \Lambda(n_2, q)$. The \emph{disjoint direct sum} of $\cB$ and $\cC$ is $\cA=\{\begin{bmatrix}
	B & 0 \\
	0 & C
	\end{bmatrix}\mid B\in \cB, C\in \cC\}\leq \Lambda(n_1+n_2, q)$. 
The disjoint direct sum of two alternating matrix spaces can be viewed as corresponding to the union of two graphs. Indeed, by \cite{LQ20}, a graph $G$ is connected if and only if its associated graphical matrix space cannot be written as the disjoint direct sum of two alternating matrix spaces.


The following proposition corresponds to Equation~\ref{eq:graph_union} in the graph setting. Its proof is in Section~\ref{sec:direct-sum}. 
\begin{proposition}\label{prop:direct-sum}
Let $\cB\leq\Lambda(n_1, q)$, $\cC\leq\Lambda(n_2, q)$, and $\cA\leq \Lambda(n_1+n_2, q)$ be the disjoint direct sum of $\cB$ and $\cC$. Then 
$$\TIP(\cA, x)=\TIP(\cB, x)\cdot\TIP(\cC, x).$$
\end{proposition}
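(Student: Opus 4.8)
The plan is to sidestep any direct decomposition of totally-isotropic spaces of $\cA$ (which would force a Goursat-type analysis of subspaces of $\F_q^{n_1}\oplus\F_q^{n_2}$) and instead prove the polynomial identity by evaluating both sides at the infinite family of points $x=q^m-1$, $m\in\N$, and then invoking interpolation. Since $q\ge 2$, these points are pairwise distinct, and $\TIP(\cA,x)$ and $\TIP(\cB,x)\cdot\TIP(\cC,x)$ are polynomials of bounded degree in $x$; so it suffices to show the two sides agree at every $x=q^m-1$ to conclude they agree as polynomials.

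The heart of the argument is a combinatorial reinterpretation of the evaluation $\TIP(\cD,q^m-1)$ for an arbitrary alternating matrix space $\cD\le\Lambda(n,q)$. First I would compute
$$x_q^d\big|_{x=q^m-1}=\prod_{i=0}^{d-1}\bigl((q^m-1)-(q^i-1)\bigr)=\prod_{i=0}^{d-1}(q^m-q^i),$$
and recognize the right-hand side as the number of linear maps $\F_q^m\to\F_q^n$ whose image is a fixed $d$-dimensional subspace (equivalently, the number of surjections $\F_q^m\twoheadrightarrow\F_q^d$). Next I would observe that for a linear map $M\colon\F_q^m\to\F_q^n$, the image $\im(M)$ is a totally-isotropic space of $\cD$ exactly when $\tr{M}DM=0$ for every $D\in\cD$: substituting $u_i=Mw_i$ in the defining condition $\tr{u_1}Du_2=0$ turns it into $\tr{w_1}(\tr{M}DM)w_2=0$ for all $w_1,w_2$. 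Grouping all linear maps $M$ according to their image and summing (the $d=0$ term, the zero map, supplies the constant $1$ with $c_0(\cD)=1$) then yields
$$\TIP(\cD,q^m-1)=\sum_{d\ge 0}c_d(\cD)\prod_{i=0}^{d-1}(q^m-q^i)=\#\{\,M\ n\times m\text{ over }\F_q:\tr{M}DM=0\ \forall D\in\cD\,\},$$
identifying each evaluation with the cardinality of a concretely described matrix set.

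With this reformulation in hand the direct-sum structure makes the product formula transparent. Writing an $(n_1+n_2)\times m$ matrix in block form $M=\begin{bmatrix}M_1\\ M_2\end{bmatrix}$ with $M_1$ of size $n_1\times m$ and $M_2$ of size $n_2\times m$, and using that every element of $\cA$ is of the form $\diag(B,C)$ with $B\in\cB$ and $C\in\cC$ chosen independently, I obtain
$$\tr{M}AM=\tr{M_1}BM_1+\tr{M_2}CM_2.$$
Hence $\tr{M}AM=0$ for all $A\in\cA$ if and only if $\tr{M_1}BM_1=0$ for all $B\in\cB$ and $\tr{M_2}CM_2=0$ for all $C\in\cC$, so $M\mapsto(M_1,M_2)$ is a bijection between the matrix set for $\cA$ and the product of the matrix sets for $\cB$ and $\cC$. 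This gives $\TIP(\cA,q^m-1)=\TIP(\cB,q^m-1)\cdot\TIP(\cC,q^m-1)$ for every $m$, and the proposition follows by interpolation. The main obstacle is the second step: pinning down the right combinatorial meaning of $x_q^d|_{x=q^m-1}$ and matching it to the matrix condition $\tr{M}DM=0$. This is exactly where the choice of basis $x_q^d$ pays off — the factor $q^{\binom{d}{2}}$ hidden in $\prod_i(q^m-q^i)$ is what converts ``number of $d$-dimensional totally-isotropic subspaces'' into ``number of maps with totally-isotropic image'', and it is this conversion, rather than any delicate subspace enumeration, that makes the product structure drop out; once it is in place the block computation and the interpolation are routine.
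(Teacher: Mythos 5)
Your proof is correct, and it takes a genuinely different route from the paper's. The paper argues directly on subspaces: it characterises the totally-isotropic $W\le\F_q^{n_1+n_2}$ as those whose two projections are totally-isotropic, fixes a pair $(U,V)$ with $\dim U=d$, $\dim V=e$, counts the $W$ of dimension $d+e-s$ projecting onto them (getting the quantity $C_{d,e,s,q}$ via a block-matrix normal form), and then needs a separate computational lemma, proved by induction using $q$-Pascal identities, asserting that $x_q^d\cdot x_q^e=\sum_{s}C_{d,e,s,q}\,x_q^{d+e-s}$. Your evaluation argument bypasses both the Goursat-type enumeration and that product formula entirely: the identity $\TIP(\cD,q^m-1)=\#\{M\in\M(n\times m,q):\tr{M}DM=0\ \forall D\in\cD\}$ is exactly right (the factor $\prod_{i=0}^{d-1}(q^m-q^i)$ counts surjections of $\F_q^m$ onto a fixed $d$-dimensional image, with the $d=0$ zero map supplying the constant term), the block computation $\tr{M}\diag(B,C)M=\tr{M_1}BM_1+\tr{M_2}CM_2$ makes the factorisation immediate, and agreement at the infinitely many distinct points $q^m-1$ forces the polynomial identity. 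What your approach buys is a conceptual explanation of why the basis $\{x_q^d\}$ is the natural one (it is the basis in which subspace counts convert to point counts of matrix varieties, which behave multiplicatively under direct sums); what the paper's approach buys is the explicit structure constants $C_{d,e,s,q}$ and the product rule for the basis polynomials, which are of independent interest. Both are complete proofs; yours is arguably shorter once the reinterpretation of $\TIP(\cD,q^m-1)$ is in place.
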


\subsection{Group-theoretic interpretations.}\label{subsec:group}

\subsubsection{A connection between alternating matrix spaces and groups.}\label{subsec:connection}
Let $p$ be a prime $>2$. Alternating matrix spaces over $\F_p$ are closely related to finite $p$-groups of class $2$ and exponent $p$ via Baer's correspondence \cite{Bae38}. That is, from such a group $P$ one can construct an alternating matrix space $\cB_P$. We briefly review this process here. 

Let $P$ be a $p$-group of class $2$ and exponent $p$. By taking the commutator map in $P$, we obtain an alternating bilinear map $\phi_P:P/[P, P]\times P/[P, P]\to[P, P]$. As $P/[P,P]$ and $[P, P]$ are elementary abelian groups, we can set $P/[P,P]\cong\Z_p^n$ and $[P, P]\cong \Z_p^m$. Let us fix a basis of $P/[P,P]$ as $\{e_1, \dots, e_n\}$, and a basis of $[P,P]$ as $\{f_1, \dots, f_m\}$. For each $f_i$, from $\phi_P$ we obtain an alternating matrix $F_i\in \Lambda(n, p)$, where the $(j, k)$th entry of $F_i$ is the coefficient of $f_i$ in $\phi_P(e_j, e_k)$. Therefore, by fixing bases of $P/[P,P]$ and $[P,P]$, one can obtain an $m$-tuple of $n\times n$ alternating matrices over $\F_p$, whose linear span is denoted by $\cB_P\leq \Lambda(n, p)$. 

Conversely, from an alternating matrix space $\cB$ one can construct such a group $P_\cB$. For a detailed description of this procedure we refer readers to \cite{LQ20,HQ21}.

%

\subsubsection{Abelian subgroups and totally-isotropic spaces.}

Let \(P\) be a \(p\)-group of class \(2\) and exponent \(p\). Since \(P\) has exponent \(p\), every abelian subgroup of \(P\) is elementary abelian, that is, isomorphic to \(\mathbb{Z}_p^k\) for some \(k \in \mathbb{N}\). Moreover, if \(B\) is an abelian subgroup of \(P\), then $
B=B'\times B''$, 
where \(B''=B\cap [P,P]\) and \(B'\cap [P,P]=1\). Since \([P,P]\le Z(P)\), it follows that \(B'[P,P]\) is again abelian. Hence every abelian subgroup of \(P\) is contained in an abelian subgroup containing the commutator subgroup.

It follows that totally isotropic subspaces of \(\mathcal B_P\) correspond to abelian subgroups of \(P\) that contain the commutator subgroup. This observation was used by Alperin to construct large abelian subgroups of \(p\)-groups of class \(2\) and exponent \(p\) \cite{Alp65}. It is also the starting point of work of Ol'shanskii \cite{Ols78} and of Buhler, Gupta and Harris \cite{BGH87} on the construction of \(p\)-groups with small maximal abelian subgroups.

\subsubsection{Enumeration polynomials of abelian subgroups containing the commutator subgroup.}
Our totally isotropic space polynomials can be viewed as enumeration polynomials for abelian subgroups of \(p\)-groups of class \(2\) and exponent \(p\) containing the commutator subgroup.

Let \(P\) be a \(p\)-group of class \(2\) and exponent \(p\). Let $A_i(P)$ be the set of abelian subgroups $S$ in $P$ containing $[P, P]$, with $S/[P,P]$ being of order $p^i$, and let $a_i(P):=|A_i(P)|$. Let $x$ be a variable, and $\alpha(P):=\max\{i\in\N\mid A_i(P)\neq\emptyset\}$. The enumeration polynomial of abelian subgroups of $P$ containing the commutator subgroup is naturally
$$
\AP(P, x):=1+\sum_{i\in[\alpha(P)]}a_i(P)\cdot x_p^i \in\Z[x].
$$

Let $\cB_P$ be the alternating matrix space associated with $P$ via the construction in Section~\ref{subsec:connection}. We then have $\AP(P, x)=\TIP(\cB_P, x)$. Note that while our construction of $\cB_P$ from $P$ is basis-dependent, the resulting polynomial $\TIP(\cB_P, x)$ does not depend on the basis choices.

Furthermore, disjoint direct sums of alternating matrix spaces correspond to direct products of groups \cite{Wil09b}. Let $P$ and $Q$ be two $p$-groups of class $2$ and exponent $p$, and $P\times Q$ be their direct product. Then Proposition~\ref{prop:direct-sum} gives us that
$
\AP(P, x)\cdot \AP(Q, x)=\AP(P\times Q, x).
$

\subsection{Rossmann's work, outlook, and open questions.}

\subsubsection{Rossmann's work \cite{Ros22} and graphical groups.}
A closely related work is by T. Rossmann, who studied enumeration functions of conjugacy classes of graphical groups over finite fields \cite{Ros22}. 
Given a graph $G$ and a commutative ring $R$, one can construct an associated graphical group. When $R=\mathbb{F}_p$ with $p>2$, this group is a $p$-group of class $2$ and exponent $p$.
In \cite{Ros22}, Rossmann first introduced a new family of graph polynomials in two variables. He then showed that specialising the first variable gives rise to the enumeration polynomial of conjugacy classes of graphical groups over finite fields \cite[Theorem A]{Ros22}. 

One consequence is that the number of size-$e$ conjugacy classes of graphical groups over finite fields of order $q$ is given by a polynomial in $q-1$ with integer coefficients \cite[Corollary C]{Ros22}. Analogously, our Theorem~\ref{thm:main_enum} can be interpreted as follows: for graphical groups over the prime field $\mathbb F_p$, with $p>2$, the number
of rank-$i$ elementary abelian subgroups containing the commutator subgroup is
given by evaluating an integer polynomial at $p$.

We note that the proof of \cite[Theorem A]{Ros22}, as that of our Theorem~\ref{thm:main}, deals with alternating bilinear maps, or equivalently in this case, alternating matrix spaces. Indeed, putting in the language of alternating matrix spaces, for $\cB\leq\Lambda(n, q)$, the object to enumerate for \cite[Theorem A]{Ros22} is 
\begin{equation}\label{eq:rossmann}
	\{v\in\F_q^n\mid \dim(\{Bv\mid B\in \cB\})=e\}.
\end{equation}

Rossmann also showed that the product of conjugacy class enumeration polynomials of two graphical groups gives the conjugacy class enumeration polynomial of the graphical group corresponding to the disjoint union of the two graphs \cite[Proposition 5.1]{Ros22}. Rossmann did not need our $x_q^i$, because the objects to enumerate there are vectors (Equation~\ref{eq:rossmann}), instead of subspaces as in our case.

\subsubsection{Outlook and open questions.} Rossmann’s work \cite{Ros22}, together with the results of the present paper, suggest that enumerating certain structures in finite groups leads to enumeration polynomials generalising corresponding graph-theoretic enumeration polynomials. It is reasonable to expect that further correspondences of this kind remain to be explored. For example, \cite{BCG+21} established a correspondence between vertex colourings of graphs and direct sum decompositions into totally isotropic spaces of alternating bilinear maps. This raises the question of whether one may define a linear-algebraic analogue of the chromatic polynomials of graphs.



Computing graph independence polynomials is a natural problem, and it has been studied from several perspectives, including recursive identities and decomposition formulas \cite[Section~2]{GH83}, as well as algorithmic approaches via the hard-core model partition function \cite{Wei06}. While the proof of Theorem~\ref{thm:main} yields a method for computing TI polynomials of graphical matrix spaces through analysing the underlying graph structure, it would be of interest to develop more efficient methods, either through recursive identities or from an algorithmic viewpoint. 

Overall, it could be of interest to further the study of $I(G, x, q)$ by transferring questions and methodologies for the independence polynomials such as in \cite{GH83}. We leave these to future works.

\section{Proof of Theorem~\ref{thm:main_enum}}\label{sec:main}

Our goal is to show that for any $1\leq i\leq \alpha(G)$, there exists $c_i(q)\in\Z[q]$, such that $c_i(q)=c_i(\cG_q)$ for any prime power $q$, and $c_i(1)=c_i(G)$. Briefly speaking, this requires us to identify some combinatorial properties of $G$ which essentially determine  $c_i(\cG_q)$. 

In the following we use $e_i$ to denote the $i$th standard basis vector of $\F_q^n$.

\paragraph{Some subsets of Grassmannians.} Let $\Gr(i, n, q)$ be the Grassmannian of $i$-dimensional subspaces of $\F_q^n$. 
We will consider the following subsets of $\Gr(i,n,q)$, each arising as the set of $\F_q$-rational points of a subvariety of the Grassmannian.


Let $\binom{[n]}{i}$ be the set of size-$i$ subsets of $[n]$. The natural total
order of $[n]$ induces the lexicographic total order on $\binom{[n]}{i}$.

For any $i$-dimensional subspace $U\le \F_q^n$, there is a unique matrix
$T_U\in \F_q^{n\times i}$ in reduced column echelon form whose columns form a basis of $U$.
Let $P_U\in \binom{[n]}{i}$ be the set of pivot-row indices of $T_U$.
Equivalently, if the Pl\"ucker coordinates of $U$ are indexed by elements of $\binom{[n]}{i}$,
then $P_U$ is the lexicographically first index at which the Pl\"ucker coordinate is non-zero. In general, if $T$ is any $n \times i$ matrix whose columns span $U$, then $P_U$ indexes the lexicographically first full-rank minor of $T$.

For $P\in \binom{[n]}{i}$, define
$$
\Gr(i,n,q)_P:=\{U\le \F_q^n\mid P_U=P\}.
$$
Thus $\Gr(i,n,q)_P$ consists of those $i$-dimensional subspaces whose reduced column echelon form has pivot rows indexed by $P$.

Fix $P\in\binom{[n]}{i}$. For any $U\in \Gr(i,n,q)_P$, let $T_U\in \F_q^{n\times i}$
be the unique matrix in reduced column echelon form whose columns form a basis of $U$.
Then $P_U=P$ is the set of pivot-row indices of $T_U$.
Define $Q_U\subseteq [n]\setminus P_U$ to be the set of non-pivot row indices at which $T_U$
has a non-zero row, namely
$$
Q_U:=\{j\in [n]\setminus P_U \mid \text{the $j$th row of }T_U\text{ is non-zero}\}.
$$
Equivalently,
$$
Q_U=\bigl\{j\in [n]\setminus P_U \mid U\not\subseteq \span\{e_k\mid k\in [n],\, k\neq j\}\bigr\}.
$$

For $P\in \binom{[n]}{i}$ and $Q\subseteq [n]\setminus P$, define
$$
\Gr(i,n,q)_{P,Q}:=\{U\in \Gr(i,n,q)\mid P_U=P,\ Q_U=Q\}.
$$

The following example illustrates that $P_U$ records the pivot rows of $T_U$, while $Q_U$ records the non-pivot rows of $T_U$ that are non-zero.

\begin{example}
	Suppose $q\geq 5$. Let $n=5$ and $i=2$, and let $U\le \F_q^5$ be the column space of
	$$
	T_U=
	\begin{pmatrix}
		1&0\\
		2&4\\
		0&1\\
		0&0\\
		3&5
	\end{pmatrix}.
	$$
	This matrix is in reduced column echelon form, and its pivot rows are $1$ and $3$. Hence
	$$
	P_U=\{1,3\}.
	$$
	Among the non-pivot rows $2,4,5$, the $2$nd and $5$th rows are non-zero, while the $4$th row is zero. Therefore
	$$
	Q_U=\{2,5\}.
	$$
	Thus $U\in \Gr(2,5,q)_{P,Q}$ for $P=\{1,3\}$ and $Q=\{2,5\}$.
\end{example}

%

\paragraph{Relating independent sets with $\Gr(i, n, q)_P$.} Let $G=([n], E)$ be a graph. For $1\leq i\leq \alpha(G)$, let $S_i(G)$ be the set of size-$i$ independent sets of $G$. For a prime power $q$, let $S_i(\cG_q)$ be the set of $i$-dimensional totally-isotropic spaces of $\cG_q$. Let $\Gr(i, n, q, G):=\Gr(i, n, q)\cap S_i(\cG_q)$. 

For $P\in\binom{[n]}{i}$, let $\Gr(i, n, q, G)_P=\Gr(i, n, q)_P\cap S_i(\cG_q)$. 
\begin{claim}\label{claim:P}
$\Gr(i, n, q, G)_P\neq\emptyset$ if and only if $P$ is an independent set.
\end{claim}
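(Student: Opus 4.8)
The plan is to translate the definition of a totally-isotropic space into a concrete, per-edge condition on coordinate projections, and then read off both directions of the equivalence from the combinatorics of the pivot set $P$. First I would compute, for column vectors $u_1, u_2 \in \F_q^n$ and an edge $\{a, b\} \in E$ with $a < b$, that $\tr{u_1} A_{a,b} u_2 = (u_1)_a (u_2)_b - (u_1)_b (u_2)_a$, which is exactly the $2 \times 2$ determinant of the matrix whose columns are the projections of $u_1$ and $u_2$ onto the coordinates $a, b$. Since $\cG_q$ is spanned by the $A_{a,b}$ with $\{a,b\} \in E$, this shows that $U$ is totally isotropic for $\cG_q$ if and only if, for every edge $\{a,b\} \in E$, the projection of $U$ onto the coordinate pair $\{a, b\}$ has dimension at most $1$. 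This reformulation is the crux: it converts the claim into a statement about which coordinate pairs $U$ can spread across.

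For the direction $(\Leftarrow)$, assuming $P$ is independent, I would exhibit the coordinate subspace $U_P := \span\{e_p \mid p \in P\}$ as a witness. Representing $U_P$ by the $n \times i$ matrix whose columns are $\{e_p\}_{p \in P}$, every size-$i$ minor indexed by some $P' \neq P$ vanishes (it contains a zero row), so $P$ is the unique nonzero Pl\"ucker coordinate and hence $P_{U_P} = P$. For total isotropy, fix an edge $\{a,b\} \in E$; since $P$ is independent, $a$ and $b$ are not both in $P$, so at most one of $e_a, e_b$ appears among the $\{e_p\}_{p \in P}$, and the projection of $U_P$ onto $\{a, b\}$ is spanned by at most one nonzero vector. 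By the reformulation $U_P$ is totally isotropic, so $U_P \in \Gr(i, n, q, G)_P$ and this set is nonempty.

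For the direction $(\Rightarrow)$, I would argue contrapositively. Suppose $P$ is not independent, so some edge $\{a, b\} \in E$ has both $a, b \in P$. Let $U$ be any subspace with $P_U = P$; then its $P$-Pl\"ucker coordinate is nonzero, so the $i \times i$ submatrix, on the rows indexed by $P$, of any spanning matrix of $U$ is invertible. After right-multiplying by its inverse I may assume $U$ has a basis $\{v_k\}$ whose restriction to the coordinates in $P$ is the standard basis of $\F_q^i$; in particular the two basis vectors sitting at the pivots $a$ and $b$ project to $(1, 0)$ and $(0, 1)$ under the coordinate pair $\{a, b\}$. Thus the projection of $U$ onto $\{a, b\}$ is $2$-dimensional, so by the reformulation $U$ is not totally isotropic, whence $\Gr(i, n, q, G)_P = \emptyset$.

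The routine linear-algebra bookkeeping---that $P_U = P$ yields a basis restricting to the identity on the rows of $P$, and that $U_P$ realizes $P$ as its lexicographically-first nonzero Pl\"ucker coordinate---is where one must be careful about conventions, but it carries no real difficulty. The only genuine step is the per-edge reformulation of the first paragraph; once that is in hand, the entire dichotomy reduces to whether an edge can have both endpoints among the pivots $P$, which is precisely the independence condition.
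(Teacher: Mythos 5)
Your proof is correct and takes essentially the same approach as the paper: the witness for the ``if'' direction is the coordinate subspace $\span\{e_p \mid p\in P\}$, and the ``only if'' direction rests on the fact that a subspace with nonzero Pl\"ucker coordinate at a non-independent set cannot be totally isotropic. Your per-edge determinant computation $\tr{u_1}A_{a,b}u_2=(u_1)_a(u_2)_b-(u_1)_b(u_2)_a$ is precisely the ``easy verification'' the paper delegates to \cite{BCG+21}, so you have simply made explicit what the paper cites.
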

\begin{proof}
If $P$ is a size-$i$ independent set $P$ of $G$, $\Gr(i, n, q)_P\cap S_i(\cG_q)$ is non-empty, as it contains the subspace $\span\{e_i\mid i\in P\}\leq\F_q^n$. 

Suppose $U\in\Gr(i, n, q)$ is a totally-isotropic space of $\cG_q$. As already observed in \cite[Section 5, proof of Theorem 1.3]{BCG+21}, it can be verified easily that, if the Pl\"ucker coordinate of $U$ at $R\in\binom{[n]}{i}$ is non-zero, then $R$ is a size-$i$ independent set of $G$. It follows that if $P\in\binom{[n]}{i}$ is not an independent set, then $\Gr(i, n, q)_P\cap S_i(\cG_q)=\emptyset$.
\end{proof}
By Claim~\ref{claim:P}, we have $\Gr(i, n, q, G)$ is a disjoint union of $\Gr(i, n, q, G)_P$ over $P\in S_i(G)$.

\paragraph{Relating graph structures with $\Gr(i, n, q)_{P, Q}$.} Let $P\in S_i(G)$, so $\Gr(i, n, q, G)_P$ is non-empty. 

Let $\Gr(i, n, q, G)_{P, Q}:=\Gr(i, n, q)_{P, Q}\cap S_i(\cG_q)$. We wish to understand when $\Gr(i, n, q, G)_{P, Q}$ is empty. This is easy for $Q=\emptyset$: in this case, $\Gr(i, n, q, G)_{P, \emptyset}=\{\span\{e_i\mid i\in P\}\}$. 

In general, for $P, Q\subseteq[n]$, let $G[P\cup Q]$ be the induced subgraph of $G$ on $P\cup Q$. Fix $Q\subseteq[n]\setminus P$. Take some $U\in \Gr(i, n, q)_{P, Q}$. Let $T$ be an $n\times i$ matrix whose columns form a basis of $U$. For $u\in [n]$, let $r_u$ be the $u$th row of $T$. 
\begin{lemma}\label{obs:easy}
Let $U\in\Gr(i, n, q)_{P, Q}$ and $T$ be as above. Then $U$ is a totally-isotropic space for $\cG_q$ if and only if for any edge	$\{u, v\}$ in $G[P\cup Q]$, $r_u$ and $r_v$ are linearly dependent. 
\end{lemma}
\begin{proof}
Recall that $G=([n], E)$. To start with, note that $U$ is a totally-isotropic space of $\cG_q$ if and only if for any $\{u, v\}\in E$, 
\begin{equation}\label{eq:all-zero}
	\tr{r_u}r_v-\tr{r_v}r_u=0_{i\times i}, 
\end{equation} 
where $0_{i\times i}$ denotes the $i\times i$ all-zero matrix. (Recall that $r_u$ and $r_v$ are row vectors.) 

If $u\not\in P\cup Q$, then $r_u$ is the zero vector, so Equation~\ref{eq:all-zero} is satisfied. The same holds for $v$. Therefore, we only need to consider $u, v\in P\cup Q$. Then note that Equation~\ref{eq:all-zero} just expresses that $r_u$ and $r_v$ are linearly dependent, concluding the proof.
\end{proof}
\begin{claim}\label{claim:Q}
	Let $u, v\in P$, $u\neq v$. If $G[P\cup Q]$ contains a path connecting $u$ and $v$, then $\Gr(i, n, q, G)_{P, Q}=\emptyset$.
\end{claim}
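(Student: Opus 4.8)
The plan is to argue by contradiction: assume $\Gr(i, n, q, G)_{P, Q}$ is non-empty, fix a witness $U$ in it, and show that two rows of a basis matrix of $U$ which the Pl\"ucker condition forces to be linearly independent are in fact proportional.

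First I would fix $U\in\Gr(i,n,q,G)_{P,Q}$ together with a basis matrix $T$ and its rows $r_u$, exactly as in the setup preceding Observation~\ref{obs:easy}. Since $U$ is totally-isotropic and lies in $\Gr(i,n,q)_{P,Q}$, Observation~\ref{obs:easy} applies directly: for every edge $\{u,v\}$ of $G[P\cup Q]$ the rows $r_u$ and $r_v$ are linearly dependent. Moreover, by the definition of $P$ (pivot rows) and $Q$ (the non-pivot rows that are non-zero), every row $r_w$ with $w\in P\cup Q$ is non-zero, a fact I will use to chain these pairwise dependencies.

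Next I would take the hypothesized path $u=w_0, w_1, \ldots, w_\ell=v$ in $G[P\cup Q]$ connecting the distinct vertices $u,v\in P$. Each consecutive pair $\{w_t, w_{t+1}\}$ is an edge of $G[P\cup Q]$, so $r_{w_t}$ and $r_{w_{t+1}}$ are linearly dependent; since both are non-zero, one is a non-zero scalar multiple of the other. Composing these scalars along the path (transitivity of proportionality among non-zero vectors) shows that $r_u=r_{w_0}$ and $r_v=r_{w_\ell}$ are proportional. Finally I would contradict this using the Pl\"ucker condition $P_U=P$: the Pl\"ucker coordinate of $U$ at $P$ is the $i\times i$ minor of $T$ on the rows indexed by $P$, and it is non-zero, so these rows are linearly independent. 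In particular, for the distinct indices $u,v\in P$ the rows $r_u, r_v$ are linearly independent, hence not proportional, contradicting the previous step. Therefore no such $U$ exists and $\Gr(i,n,q,G)_{P,Q}=\emptyset$.

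I expect the only genuinely substantive point to be the passage from pairwise dependence to global proportionality along the path: it is essential that all path vertices lie in $P\cup Q$, so that their rows are non-zero, since a single zero intermediate row would break the chain of scalar multiples. The remaining ingredients---Observation~\ref{obs:easy} for the edges and the non-vanishing $P$-minor for the independence of the pivot rows---are immediate from the definitions.
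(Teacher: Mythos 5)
Your proof is correct and follows essentially the same route as the paper's: apply Observation~\ref{obs:easy} edge by edge along the path, chain the resulting proportionalities (using that all rows indexed by $P\cup Q$ are non-zero), and contradict the linear independence of the rows indexed by $P$ forced by the non-vanishing Pl\"ucker coordinate. You merely spell out the transitivity step that the paper leaves implicit.
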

\begin{proof}
For the sake of contradiction, suppose $\Gr(i, n, q, G)_{P, Q}$ is non-empty. Take an $n\times i$ matrix $T$ whose column span is in $\Gr(i, n, q, G)_{P, Q}$. By Lemma~\ref{obs:easy}, rows of $T$ corresponding to the vertices on this path are all linearly dependent. It follows that row $u$ and row $v$ are linearly dependent, which is not possible because $u, v\in P$. 
\end{proof}

Claim~\ref{claim:Q} implies that for $\Gr(i, n, q, G)_{P, Q}$ to be non-empty, every connected component of $G[P\cup Q]$ contains at most one $u\in P$. 

\begin{claim}\label{claim:Q1}
	Let $C\subseteq P\cup Q$ be a connected component of $G[P\cup Q]$, and suppose $C$ contains exactly one $u\in P$. If there exists $v\in C$ with $v< u$, then $\Gr(i, n, q, G)_{P, Q}=\emptyset$.
\end{claim}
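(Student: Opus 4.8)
The plan is to argue by contradiction: I would suppose that some $U \in \Gr(i,n,q,G)_{P,Q}$ exists and derive a contradiction from the hypothesis $v < u$. Fix an $n\times i$ matrix $T$ whose columns span $U$, and write $r_1,\dots,r_n$ for its rows. Since $P_U = P$, the $i\times i$ submatrix $T[P]$ on the rows indexed by $P$ is invertible; right-multiplying $T$ by $T[P]^{-1}$ is a change of basis of $U$ that alters neither $U$ itself nor the derived data $P_U,Q_U$, so I may assume $T[P] = \idmat_i$. Writing $P=\{p_1<\dots<p_i\}$ and $u = p_k$, this normalization makes $r_u$ the $k$-th standard basis row vector: its unique nonzero entry is a $1$ in coordinate $k$.

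The heart of the argument, and the step I expect to require the most care, is extracting from the \emph{lexicographically first} property of $P_U=P$ the vanishing of a single entry of $T$. Concretely, I would show $T_{v,k}=0$. Put $R:=(P\setminus\{u\})\cup\{v\}$; since $v\notin P$ and $v<p_k$, once $R$ is sorted it first disagrees with $P$ at an index where it carries the smaller element, so $R$ precedes $P$ lexicographically and hence the Pl\"ucker coordinate $\det T[R]$ vanishes by minimality of $P_U$. On the other hand, in $T[R]$ all rows except $r_v$ are the distinct standard basis rows $\tr{e_m}$ with $m\neq k$, so expanding the determinant leaves $\det T[R]=\pm T_{v,k}$; comparing gives $T_{v,k}=0$. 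The only routine part here is checking that $R<P$ for every position of $v$ relative to the $p_m$, and pinning down the sign of the cofactor, neither of which affects the conclusion.

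Finally I would use the connectivity of $C$ to contradict this. Because $C$ is a connected component of $G[P\cup Q]$ containing both $u$ and $v$, there is a path between them inside $G[P\cup Q]$; every vertex on it lies in $P\cup Q$, so each corresponding row of $T$ is nonzero, and by Observation~\ref{obs:easy} the rows of adjacent vertices are linearly dependent, hence nonzero scalar multiples of each other. Propagating along the path yields $r_v=\mu\, r_u$ for some $\mu\neq 0$, so by the normalization the $k$-th coordinate of $r_v$ equals $\mu$; that is, $T_{v,k}=\mu\neq 0$, contradicting the previous paragraph. Therefore no such $U$ exists and $\Gr(i,n,q,G)_{P,Q}=\emptyset$. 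The genuine obstacle is the Pl\"ucker/echelon bookkeeping of the middle step; the first and third steps are direct consequences of the normalization and of Observation~\ref{obs:easy}.
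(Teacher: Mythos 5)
Your proposal is correct and follows essentially the same route as the paper: both arguments propagate the linear dependence of rows along the path from $u$ to $v$ (via Observation~\ref{obs:easy}) to get $r_v$ a nonzero multiple of $r_u$, and then observe that swapping $v$ for $u$ in $P$ produces a lexicographically earlier index set with nonvanishing Pl\"ucker coordinate, contradicting $P_U=P$. Your version merely makes the paper's one-line ``replace $u$ with $v$'' step explicit through the normalization $T[P]=\idmat_i$ and the cofactor computation $\det T[R]=\pm T_{v,k}$.
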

\begin{proof}
	For the sake of contradiction, suppose $\Gr(i, n, q, G)_{P, Q}$ is non-empty. Take an $n\times i$ matrix $T$ whose column span $U$ is in $\Gr(i, n, q, G)_{P, Q}$. For $w\in [n]$, let $r_w$ be the $w$th row of $T$. 
	Note that $v\in Q$, as $u$ is the only vertex of $C$ lying in $P$. Furthermore, $r_v$ is non-zero, and it is linearly dependent with $r_u$. So if $v<u$, we can replace $u$ with $v$ in $P$, so $P$ cannot be the lexicographic-first non-zero Pl\"ucker coordinate of $U$.
\end{proof}

\begin{claim}\label{claim:Q0}
Let $D\subseteq P\cup Q$ be a connected component of $G[P\cup Q]$, and suppose $D$ does not contain $u\in P$. If $\min\{v\mid v\in D\}<\min\{u\mid u\in P\}$, then $\Gr(i, n, q, G)_{P, Q}=\emptyset$.
\end{claim}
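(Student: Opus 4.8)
The plan is to reduce Claim~\ref{claim:Q0} to a statement about the Grassmannian stratum $\Gr(i,n,q)_{P,Q}$ and argue by contradiction against the minimality built into the condition $P_U = P$, in the same spirit as Claim~\ref{claim:Q1} but without needing any linear-dependence information. Set $p_1 := \min P$ and let $v := \min\{v' \mid v' \in D\}$, so that by hypothesis $v < p_1$. Since $D$ contains no vertex of $P$ we have $D \subseteq Q$, whence $v \in Q = Q_U$, and therefore the $v$th row $r_v$ of $T$ is non-zero. This is the only property of $D$ and of the totally-isotropic condition that the argument will use.

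First I would normalize the basis. Because $P_U = P$, the $i \times i$ submatrix $T_P$ of $T$ on the rows indexed by $P$ is invertible; replacing $T$ by $T \cdot (T_P)^{-1}$ keeps the column span equal to $U$ and rescales every Pl\"ucker coordinate by the same non-zero factor, hence leaves $P_U$ and $Q_U$ unchanged. After this normalization the rows of $T$ indexed by $p_1, \dots, p_i$ are exactly the standard basis rows $e_1, \dots, e_i$ of $\F_q^i$, and $r_v = (w_1, \dots, w_i)$ is some non-zero vector.

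The core step is a single coordinate-swap. Choose an index $j$ with $w_j \neq 0$ (one exists since $r_v \neq 0$) and form $P' := (P \setminus \{p_j\}) \cup \{v\} \in \binom{[n]}{i}$. The Pl\"ucker coordinate of $U$ at $P'$ is, up to sign, the determinant of the matrix whose rows are $\{e_k \mid k \neq j\}$ together with $w$; this is the identity with its $j$th row replaced by $w$, so the determinant equals $\pm w_j \neq 0$. On the other hand $v < p_1 \le p_j$, so $v$ is the smallest element of $P'$ and $P'$ precedes $P$ lexicographically. Thus $U$ has a non-zero Pl\"ucker coordinate lexicographically before $P$, contradicting $P_U = P$; hence no such $U$ exists and $\Gr(i, n, q, G)_{P, Q} = \emptyset$.

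The only place that needs care is verifying that replacing $p_j$ by the strictly smaller $v$ produces a lexicographically earlier index and a non-vanishing minor simultaneously; both are immediate here precisely because $v$ lies below all of $P$, which is why picking any $j$ in the support of $r_v$ works. I expect this proof to be shorter than that of Claim~\ref{claim:Q1}: there one needs the linear dependence of $r_v$ on the row of the unique $P$-vertex in the component to justify the swap, whereas here the absence of a $P$-vertex is harmless, since membership $v \in Q$ already forces $r_v \neq 0$ and the hypothesis $v < \min P$ lets us discard any $p_j$ with $w_j \neq 0$. In particular the argument never invokes totally-isotropy; it shows the ambient stratum $\Gr(i,n,q)_{P,Q}$ is already empty whenever $\min Q < \min P$.
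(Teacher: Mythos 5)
Your proof is correct and follows the same route as the paper, which simply asserts in one line that $P$ could not then be the lexicographically first non-zero Pl\"ucker coordinate; you have supplied the underlying mechanism (normalize $T_P$ to the identity and swap $p_j$ for $v$ to exhibit an earlier non-vanishing minor). Your closing observation is also accurate: the argument uses only $v\in Q_U$ and $v<\min P$, so the ambient stratum $\Gr(i,n,q)_{P,Q}$ is already empty, independently of the graph and of total isotropy.
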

\begin{proof}
	For the sake of contradiction, suppose $\Gr(i, n, q, G)_{P, Q}$ is non-empty. Take an $n\times i$ matrix $T$ whose column span $U$ is in $\Gr(i, n, q, G)_{P, Q}$. As $D$ does not contain $u\in P$, we have $D\subseteq Q$.
	If $\min\{v\mid v\in D\}<\min\{u\mid u\in P\}$, then the lexicographically first non-zero row of $T$ is indexed by some $v'\in Q\setminus P$. It follows $P$ cannot be the lexicographic-first non-zero Pl\"ucker coordinate of $U$.
\end{proof}

The conditions in Claims~\ref{claim:Q}, \ref{claim:Q1}, \ref{claim:Q0} can then be used to deduce a characterisation of non-empty $\Gr(i, n, q, G)_{P, Q}$.
\begin{lemma}\label{lem:Q}
$\Gr(i, n, q, G)_{P, Q}\neq\emptyset$ if and only if $G[P\cup Q]$ satisfies the following: 
\begin{enumerate}
	\item every connected component of $G[P\cup Q]$ contains at most one $u\in P$, 
	\item any connected component $C$ with one $u\in P$ satisfies $v\geq u$ for $v\in C$, 
	\item and any connected component $D$ with no $u\in P$ satisfies $\min\{v\mid v\in D\}> \min\{u\mid u\in P\}$.
\end{enumerate}

Furthermore, when $\Gr(i,n,q,G)_{P,Q}\neq\emptyset$ and $Q\neq\emptyset$, 
$\lvert \Gr(i,n,q,G)_{P,Q}\rvert$ can be expressed as a non-empty and finite product of factors of the form $q^e-1$, with $e\in\N$.
\end{lemma}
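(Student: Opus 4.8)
The ``only if'' direction is already in hand: a connected component of $G[P\cup Q]$ containing two vertices of $P$ yields a path between them, so Claim~\ref{claim:Q} forces emptiness and gives condition~1, while conditions~2 and~3 are exactly the contrapositives of Claims~\ref{claim:Q1} and~\ref{claim:Q0}. Thus the whole substance is the ``if'' direction together with the cardinality formula, and the plan is to prove both at once by explicitly parametrising the members of $\Gr(i,n,q,G)_{P,Q}$ and counting them; positivity of the resulting product then yields non-emptiness for free.

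First I would fix a normal form. Writing $P=\{p_1<\dots<p_i\}$, the hypothesis $P_U=P$ means the Plücker coordinate at $P$ is non-zero, so the $i\times i$ block $T$ on rows $P$ is invertible and $U$ has a unique basis with that block equal to $I_i$; this makes $U\mapsto T$ a bijection with $r_{p_k}=e_k$. By Observation~\ref{obs:easy} and its converse (proportional rows make the relevant form vanish identically), $U$ is totally-isotropic iff within each connected component of $G[P\cup Q]$ all rows are pairwise proportional, hence all lie on a single line of $\F_q^i$. Under condition~1 the components split into those containing a (necessarily unique) vertex of $P$, say $u_j=p_{k_j}$, and those $D_l$ containing none. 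A component-$C_j$ row is then forced onto $\span\{e_{k_j}\}$, so each $v\in C_j$ other than $u_j$ contributes a scalar in $\F_q^\ast$; a $D_l$-row lies on an a priori free line, each $v\in D_l$ contributing a non-zero vector on it.

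The crux is to pin down exactly when such a $T$ realises $P_U=P$ and $Q_U=Q$, and this is where conditions~2 and~3 get consumed. I would track the projection profile $\dim\pi_r(U)$, where $\pi_r$ is the projection onto the first $r$ coordinates; this equals the rank of the first $r$ rows of $T$, and its jump set is precisely the support of the lexicographically-first non-zero Plücker coordinate, so $P_U=P$ is equivalent to the profile jumping by one exactly at each $r\in P$. By a single induction I expect to show $V_r:=\span\{r_1,\dots,r_r\}=\span\{e_1,\dots,e_{j(r)}\}$ with $j(r)=|\{k:p_k\le r\}|$: condition~2 guarantees every $C_j$-row of index $v$ has $v>p_{k_j}$, so $r_v\in\span\{e_{k_j}\}\subseteq V_{v-1}$ and causes no jump, whereas the no-jump requirement at $m_l:=\min D_l$ forces the line of $D_l$ into $V_{m_l-1}=\span\{e_1,\dots,e_{k_l^\ast}\}$ where $k_l^\ast:=|\{k:p_k<m_l\}|$. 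Condition~3 ($m_l>\min P=p_1$) is exactly what makes $k_l^\ast\ge 1$, so this subspace is non-trivial; conversely, any line inside it together with any non-zero scalars leaves the profile jumping only on $P$ and keeps $Q_U=Q$ (the $Q$-rows being non-zero and the rest vanishing). I expect this profile/induction bookkeeping to be the main obstacle, since it is what converts the two lexicographic conditions into the precise linear-algebraic constraint on each $D$-component's direction.

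With the parametrisation settled the count is a product over components. Each $C_j$ contributes $(q-1)^{|C_j|-1}$, one factor $q-1$ per vertex of $C_j$ beyond $u_j$; each $D_l$ contributes $\tfrac{q^{k_l^\ast}-1}{q-1}\cdot(q-1)^{|D_l|}=(q^{k_l^\ast}-1)(q-1)^{|D_l|-1}$, namely a line in the $k_l^\ast$-dimensional subspace times a non-zero vector per vertex. Hence
$$|\Gr(i,n,q,G)_{P,Q}|=\prod_{j}(q-1)^{|C_j|-1}\cdot\prod_{l}(q^{k_l^\ast}-1)(q-1)^{|D_l|-1},$$
a product of factors each of the form $q^e-1$ with $e\ge 1$ (an empty product equal to $1$, recovering the single space $\span\{e_i\mid i\in P\}$ when $Q=\emptyset$). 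Since $q\ge 2$ every factor is positive, which simultaneously establishes non-emptiness and completes the ``if'' direction.
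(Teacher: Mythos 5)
Your proposal is correct and follows essentially the same route as the paper: the only-if direction via Claims~\ref{claim:Q}, \ref{claim:Q1}, \ref{claim:Q0}, and the if direction by normalising the $P$-rows to the identity and counting, component by component, scalar multiples for components meeting $P$ and a line in $\span\{e_1,\dots,e_{k^\ast}\}$ plus scalars for components missing $P$, yielding the same product $\prod_j(q-1)^{|C_j|-1}\prod_l(q^{k_l^\ast}-1)(q-1)^{|D_l|-1}$. Your rank-profile argument identifying $P_U=P$ with the jump set of $r\mapsto\dim\pi_r(U)$ makes explicit a step the paper only sketches, but it is a refinement of the same construction rather than a different method.
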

\begin{proof}
The only if direction has been shown by Claims~\ref{claim:Q}, \ref{claim:Q1}, and~\ref{claim:Q0}. For the if direction, we can construct $U\in \Gr(i, n, q, G)_{P, Q}$ when $G[P\cup Q]$ satisfies the conditions in the statement. This construction process also gives the number of such $U$.

Recall that 
$$\Gr(i, n, q, G)_{P, Q}=\Gr(i, n, q)_{P, Q}\cap S_i(\cG_q).$$ Subspaces in $\Gr(i, n, q)_{P, Q}$ are in one-to-one correspondence with matrices $T$ of size $n\times i$, such that the submatrix of $T$ indexed by $P$ is the identity matrix, and the non-zero rows of $T$ are in $P\cup Q$. For $u\in [n]$, let $r_u$ be the $u$th row of $T$. We turn to examine those $T$ whose column span is totally-isotropic for $\cG_q$, and count the numbers of such $T$. 



By Lemma~\ref{obs:easy}, the column span of $T$ is totally-isotropic for $\cG_q$ if and only if for any edge $\{u, v\}\in G[P\cup Q]$, $r_u$ and $r_v$ are linearly dependent. This is the condition we will need to keep track of during the construction. Now consider the following two cases.
\begin{enumerate}
	\item[(a)] For a connected component $C$ with $u\in P$, note that $C$ contains exactly one $u\in P$, as $T$ is assumed to be the identity on the rows indexed by $P$. Consider $v\in C$, $v\neq u$. Then we must have $r_v=\alpha\cdot r_u$ with non-zero $\alpha\in\F_q$. This gives $(q-1)^{|C|-1}$ possibilities for $r_v$, $v\neq u$, $v\in C$. 
	\item[(b)] For a connected component $D$ with no $u\in P$, let $v^*=\min\{v\mid v\in D\}$. Suppose $|\{u\in P\mid u<v^*\}|=d$. Note that $d\geq 1$. In this case, there are $q^d-1$ possibilities for a non-zero $r_{v^*}$, because $r_{v^*}$ must be spanned by $\{r_u : u \in P,\, u < v^*\}$. For other $v\in D$, $r_v=\alpha\cdot r_{v^*}$, so there are $(q-1)^{|D|-1}$ possibilities. 
\end{enumerate}

It can be seen that the resulting matrix $T$ spans a totally-isotropic space, as for any edge $\{u, v\}\in G[P\cup Q]$, $r_u$ and $r_v$ are linearly dependent. Furthermore, the above assignments of $r_v$ for $v\in C$, $v\neq u$ and $r_v$ for $v\in D$ give rise to, and cover, $T\in \Gr(i, n, q)_{P, Q}$ corresponding to totally-isotropic $U\in\Gr(i, n, q, G)_{P, Q}$. That is, there are two types of restrictions: the restriction from the graph $G[P\cup Q]$, and the restriction of the lexicographic order. These two types of restrictions together give a product of factors of the form
$q^e-1$.


Finally, note that if $Q\neq\emptyset$, then at least one of the following two cases must hold, namely case (a) with $|C|>1$, or case (b) with $|D|\geq 1$. In the former case, there exists $v\in C$, $v\neq u$, such that $r_v=\alpha\cdot r_u$, leading to a factor of $(q-1)$. In the latter case, there is at least a factor of the form $q^d-1$. That is, some non-trivial $q^e-1$ factor appears as long as $Q\neq \emptyset$. The proof is concluded. 
\end{proof}

\paragraph{Concluding the proof of Theorem~\ref{thm:main_enum}.} We have seen that $\Gr(i, n, q, G)$ is a disjoint union of $\cup_{P\in S_i(G)}\Gr(i, n, q, G)_P$. 

For $P\in S_i(G)$, let 
$$\mathcal Q_P=\{Q\subseteq[n]\setminus P\mid G[P\cup Q] \text{ satisfies (1), (2) and (3) in Lemma~\ref{lem:Q}}\}.$$
Then we have
$$\Gr(i, n, q, G)_P=\bigcup_{Q\in \mathcal Q_P} \Gr(i, n, q, G)_{P, Q}.$$
We now distinguish between two cases. First, 
when $Q=\emptyset$, it is easy to observe that $|\Gr(i, n, q, G)_{P, \emptyset}|=1$. Second, when $Q\neq\emptyset$, by Lemma~\ref{lem:Q}, $|\Gr(i, n, q, G)_{P, Q}|$ is a non-empty product of $q^e-1$ where the exponents only depend on $P$, $Q$, and the graph structure $G[P\cup Q]$. Therefore, for $P\in S_i(G)$, 
$$|\Gr(i, n, q, G)_{P}|=1+\sum_{Q\in\mathcal Q_P, Q\neq \emptyset} \prod_{e_i}(q^{e_i}-1)$$ 
for any $q$. In particular, $|\Gr(i, n, 1, G)_{P}|=1$. This concludes the proof of Theorem~\ref{thm:main_enum}. \qed

\section{Proof of Proposition~\ref{prop:direct-sum}}\label{sec:direct-sum}

The goal of this section is to prove Proposition~\ref{prop:direct-sum}. We first prepare for the proof.

We use $\qbinom{n}{k}{q}$ to denote the Gaussian binomial coefficient counting the number of $k$-dimensional subspaces of $\F_q^n$. That is, 
$$
\qbinom{n}{k}{q}=\frac{(q^n-1)\cdot(q^n-q)\cdot\ldots\cdot(q^n-q^{k-1})}{(q^k-1)\cdot(q^k-q)\cdot\ldots\cdot(q^k-q^{k-1})}.
$$

We also need to work with the following setting. Let $X_1$ be the subspace of $\F_q^{n_1+n_2}$ spanned by the first $n_1$ standard basis vectors, and let $X_2$ be the subspace of $\F_q^{n_1+n_2}$ spanned by the last $n_2$ standard basis vectors. Let $\pi_1$ be the projection from $\F_q^{n_1+n_2}$ onto $X_1$ along $X_2$, and let $\pi_2$ be the projection from $\F_q^{n_1+n_2}$ onto $X_2$ along $X_1$. 

Let $U\leq X_1$ and $V\leq X_2$, with $\dim(U)=d$ and $\dim(V)=e$. We are interested in subspaces $W\leq \F_q^{n_1+n_2}$ such that $\pi_1(W)=U$ and $\pi_2(W)=V$. Note that $\max\{d, e\}\leq \dim(W)\leq d+e$. Let $s:=d+e-\dim(W)$, so $0\leq s\leq \min\{d, e\}$. 

Given $d, e, 
s\in \N, s\leq \min\{d, e\}$, we define 
\begin{equation}\label{eq:Cdesq}
C_{d, e, s, q}=\qbinom{d}{s}{q}\cdot 
\qbinom{e}{s}{q}\cdot (q^s-1)\cdot \ldots\cdot (q^s-q^{s-1}).
\end{equation}
When $q$ is obvious from the context, we may simply write $C_{d, e, s}$ 
instead of $C_{d, e, s, q}$. 

We then show the following counting result; see Section~\ref{subsec:des} for its proof.

\begin{lemma}\label{lem:Cdesq}
	The number of $(d+e-s)$-dimensional $W$ with $\pi_1(W)=U$ and $\pi_2(W)=V$ is $C_{d, e, s, q}$.
\end{lemma}

We now relate $C_{d,e,s,q}$ with $x_q^d$ introduced in Equation~\ref{eq:xyd}; see Section~\ref{subsec:Ed} for its proof.
\begin{lemma}\label{lem:Ed}
For $d, e\in\N$,  we have 
\begin{equation}\label{eq:xqd}
x_q^d\cdot x_q^e=\sum_{s=0}^{\min\{d, e\}}C_{d, e, s, q}\cdot x_q^{d+e-s}.
\end{equation}
\end{lemma}

We now prove Proposition~1.5 using Lemmas~3.1 and~3.2.

\paragraph{Proposition~\ref{prop:direct-sum}, restated.}
Let $\cB\leq\Lambda(n_1, q)$, $\cC\leq\Lambda(n_2, q)$, and $\cA\leq \Lambda(n_1+n_2, q)$ be the disjoint direct sum of $\cB$ and $\cC$. Then $\TIP(\cA, x)=\TIP(\cB, x)\cdot\TIP(\cC, x)$.
\vskip 1em

\begin{proof}[Proof of Proposition~\ref{prop:direct-sum}]
	
	
	Let $X_1$ be the subspace of $\F_q^{n_1+n_2}$ spanned by the first $n_1$ standard basis vectors, and let $X_2$ be the subspace of $\F_q^{n_1+n_2}$ spanned by the last $n_2$ standard basis vectors. Let $\pi_1$ be the projection from $\F_q^{n_1+n_2}$ onto $X_1$ along $X_2$, and let $\pi_2$ be the projection from $\F_q^{n_1+n_2}$ onto $X_2$ along $X_1$. 
	
	Let $U\leq X_1$ be a totally-isotropic space of $\cB$, and $V\leq X_2$ be a totally-isotropic space of $\cC$. Suppose $\dim(U)=d$ and $\dim(V)=e$. Therefore, $U$ contributes $x_q^d$ in $\TIP(\cB, x)$, and $V$ contributes $x_q^e$ in $\TIP(\cC, x)$. 
	
	As $\cA$ is the disjoint direct sum of $\cB$ and $\cC$, it can be verified that $W$ is totally-isotropic for $\cA$ if and only if $\pi_1(W)$ is totally-isotropic for $\cB$ and $\pi_2(W)$ is totally-isotropic for $\cC$. 
	
	Lemma~\ref{lem:Cdesq} shows that $C_{d, e, s, q}$ is the number of totally-isotropic spaces $W$ such that $\pi_1(W)=U$, $\pi_2(W)=V$, $\dim(W)=d+e-s$. Lemma~\ref{lem:Ed} shows that the number $C_{d, e, s, q}$ is the coefficient of $x_q^{d+e-s}$ in the expansion of $x_q^d\cdot x_q^e$. The proof is then concluded. 
	%
\end{proof}
After the proof, it may be instructive to examine an example. 
\begin{example}
Let $\cB\leq\Lambda(n_1, q)$ and $\cC\leq\Lambda(n_2, q)$. Let $\cA\leq\Lambda(n_1+n_2, q)$ be the disjoint direct sum of $\cB$ and $\cC$. Let $U_1=\span\{u_1\}$ be a $1$-dimensional subspace of $\F_q^{n_1}$ and $U_2=\span\{u_2\}$ a $1$-dimensional subspace of $\F_q^{n_2}$. By the alternating property, $U_1$ and $U_2$ are totally-isotropic spaces of $\cB$ and $\cC$, respectively. 

Suppose we are interested in $W\leq\F_q^{n_1+n_2}$ such that $\pi_1(W)=U_1$ and $\pi_2(W)=U_2$, where $\pi_1$ is the projection to the first $n_1$ coordinates, and $\pi_2$ is the projection to the last $n_2$ coordinates. Note that such $W$ is a totally-isotropic space for $\cA$. We then see that $\dim(W)=2$ or $\dim(W)=1$. When $\dim(W)=2$, such $W$ is unique. When $\dim(W)=1$, then $W=\span\{w\}\in \F_q^{n_1+n_2}$ for some non-zero $w\in \F_q^{n_1+n_2}$. Suppose $w=\begin{bmatrix}
	w_1\\
	w_2
\end{bmatrix}$, where $w_1\in\F_q^{n_1}$. We can then fix $w_1=u_1$, and after that, $w_2$ can be any non-zero scalar multiple of $u_2$. It follows that there are $(q-1)$ such $1$-dimensional $W$. 

We then see that this is consistent with the choices of $x_q^i$. That is, $x_q\cdot x_q=x^2=x(x-(q-1))+(q-1)\cdot x=x_q^2+(q-1)\cdot x_q$.
\end{example}

\subsection{Proof of Lemma~\ref{lem:Cdesq}}\label{subsec:des}
	We need to count the number of $W\leq\F_q^{n_1+n_2}$ such that
	\begin{equation}\label{eq:W}
	 \dim(W)=d+e-s, \pi_1(W)=U, \text{ and } \pi_2(W)=V.
	 \end{equation}
	Let $K_1=\ker(\pi_2)\cap W=X_1\cap W$, and $K_2=\ker(\pi_1)\cap W=X_2\cap W$.
	By $\dim(W)=\dim(V)+\dim(K_1)$, we have $\dim(K_1)=\dim(W)-\dim(V)=d+e-s-e=d-s$. Similarly, we have $\dim(K_2)=e-s$.
	
	Take any $W\leq \F_q^{n_1+n_2}$ satisfying Equation~\ref{eq:W}. Let $T\in\M((n_1+n_2)\times (d+e-s), q)$ be a matrix whose columns span $W$. By arranging an appropriate basis of $W$, we can set 
	\begin{equation}\label{eq:T}
		T=\begin{bmatrix}
			T_1 & T_2 & 0_{n_1\times (e-s)} \\
			0_{n_2\times (d-s)} & T_3 & T_4
		\end{bmatrix},
	\end{equation}
	where $T_2\in \M(n_1\times s, q)$ and $T_3\in \M(n_2\times s, q)$. We then see that the columns of $\begin{bmatrix}
		T_1 \\
		0_{n_2\times (d-s)}
	\end{bmatrix}$ span $K_1$, the columns of 
	$\begin{bmatrix}
		0_{n_1\times (e-s)} \\
		T_4
	\end{bmatrix}$ span $K_2$, the columns of 
	$\begin{bmatrix}
		T_2 \\
		0_{n_2\times s}
	\end{bmatrix}$ span $L_1$ which is a complementary subspace of $K_1$ in $U$, and 
	the columns of 
	$\begin{bmatrix}
		0_{n_1\times s} \\
		T_3
	\end{bmatrix}$ span $L_2$ which is a complementary subspace of $K_2$ in $V$. That is, $U=K_1\oplus L_1$ and $V=K_2\oplus L_2$.
	

	Any subspace $W$ satisfying Equation~\ref{eq:W} has an ordered linear basis as columns of a matrix $T$ in Equation~\ref{eq:T}. On the other hand, every such $T$ gives rise to such a subspace $W$. However, two different $T$ and $T'$ may give rise to the same $W$. It is clear that for $T$ and $T'$ to give rise to the same $W$, then $T$ and $T'$ must be related by the following transformations: 
	\begin{enumerate}
		\item Elementary operations on the first $d-s$ columns. This means that to avoid double counting, we need to count the number of subspaces spanned by $\begin{bmatrix}
			T_1 \\
			0_{n_2\times (d-s)}
		\end{bmatrix}$. 
		\item Elementary operations on the last $e-s$ columns. This means that to avoid double counting, we need to count the number of subspaces spanned by $\begin{bmatrix}
			0_{n_1\times (e-s)} \\
			 T_4
		\end{bmatrix}$. 
		\item Adding columns from the first $d-s$ columns and the last $e-s$ columns to the middle $s$ columns. This means that to avoid double counting, we don't need to enumerate complementary subspaces of $K_1$ in $U$, but any representative complementary subspace would suffice. Similarly, we don't need to enumerate complementary subspaces of $K_2$ in $V$.
		\item Elementary operations on the middle $s$ columns. This means that to avoid double counting, we can fix an ordered basis of $L_1$, that is the $T_2$ part in Equation~\ref{eq:T}. Note that this leaves $T_3$ part undetermined. 
	\end{enumerate} 
	
	
	Summarising the above, we proceed with the counting. To start with, by  (1) and (2), the choices of $K_1$ and $K_2$ are uniquely determined by $W$, and there are $\qbinom{d}{s}{q}\cdot \qbinom{e}{s}{q}$ such choices. Once these $K_1$ and $K_2$ are fixed, by (3), we can fix $L_1$ and $L_2$ as complementary subspaces of $K_1$ in $U$ and $K_2$ in $V$, respectively. 
	By (4), the $T_3$ part needs to be an ordered basis of $L_2$, and the number of ordered bases is $(q^s-1)\cdot \ldots\cdot (q^s-q^{s-1})$.  
	
	Putting these together, we get the number of such subspaces $W$ as 
	$$\qbinom{d}{s}{q}\cdot \qbinom{e}{s}{q}\cdot (q^s-1)\cdot \ldots\cdot (q^s-q^{s-1})= C_{d, e, s, q}.$$ 
	This concludes the proof of Lemma~\ref{lem:Cdesq}. \qed

\subsection{Proof of Lemma~\ref{lem:Ed}}\label{subsec:Ed} By switching the role of $d$ and $e$ in the following if necessary, we can assume $e\leq d$. Proof by induction on $e$. If $e=0$, then this holds trivially. 

Consider the case \(e\geq 1\), and assume that the claim holds for all integers from \(1\) up to \(e-1\).
By 
induction, we have 
\begin{eqnarray*}
x_q^d\cdot x_q^e & = & x_q^d\cdot x_q^{e-1}\cdot (x-(q^{e-1}-1)) \\
& = & \left(\sum_{s=0}^{e-1}C_{d, e-1, s}\cdot x_q^{d+e-1-s}\right)\cdot \left(x-(q^{e-1}-1)\right).
\end{eqnarray*}
For $i\in\{0, 1, \dots, e-1\}$, we have 
\begin{eqnarray}
& & C_{d, e-1, i}\cdot x_q^{d+e-1-i}\cdot (x-(q^{e-1}-1)) \nonumber\\
& = & C_{d, e-1, i}\cdot x_q^{d+e-1-i}\cdot 
(x-(q^{d+e-1-i}-1) \nonumber \\
& & \quad +(q^{d+e-1-i}-1)-(q^{e-1}-1)) \nonumber\\
& = & C_{d, e-1, i}\cdot x_q^{d+e-1-i}\cdot (x-(q^{d+e-i-1}-1)) \nonumber\\
& & \quad +C_{d, e-1, i}\cdot 
x_q^{d+e-1-i}\cdot (q^{d+e-i-1}-q^{e-1}) \nonumber\\
& = & C_{d, e-1, i}\cdot x_q^{d+e-i}+C_{d, e-1, i}\cdot x_q^{d+e-1-i}\cdot 
q^{e-1}\cdot (q^{d-i}-1). \label{eq:ed}
\end{eqnarray}

First, set $i=0$ in the formula in Equation~\ref{eq:ed} to obtain
$$
C_{d, e-1, 0}\cdot x_q^{d+e}+C_{d, e-1, 0}\cdot x_q^{d+e-1}\cdot 
q^{e-1}\cdot (q^{d}-1).
$$
We then see that the coefficient of $x_q^{d+e}$ is $C_{d, e-1, 0}$. As $C_{d, e-1, 0}=C_{d, e, 0}$, we have $C_{d, e-1, 0}\cdot x_q^{d+e}=C_{d, e, 0}\cdot x_q^{d+e}$, giving us the term corresponding to $s=0$ in Equation~\ref{eq:xqd}.

Second, set $i=e-1$ in the formula in Equation~\ref{eq:ed} to obtain
$$
C_{d, e-1, e-1}\cdot x_q^{d+1}+C_{d, e-1, e-1}\cdot x_q^{d}\cdot 
q^{e-1}\cdot (q^{d-e+1}-1).
$$
We then see that the coefficient of $x_q^d$ is $C_{d, e-1, e-1}\cdot q^{e-1}\cdot (q^{d-e+1}-1)$, which is equal to $C_{d, e, e}$. This can be seen by setting $s=e$ in Equation~\ref{eq:Cdesq}, which gives $C_{d, e, e}=\qbinom{d}{e}{q}\cdot \qbinom{e}{e}{q}\cdot (q^e-1)\cdot\ldots\cdot (q^e-q^{e-1})=(q^d-1)\cdot\ldots\cdot(q^d-q^{e-1})=C_{d, e-1, e-1}\cdot(q^d-q^{e-1})=C_{d, e-1,e-1}\cdot q^{e-1}\cdot (q^{d-e+1}-1)$. 

Third, it remains to examine $x_q^{d+e-j}$ for $j\in \{1, \dots, e-1\}$. Its coefficient has two summands: one is $C_{d, e-1, j}$, coming from setting $i=j$ in Equation~\ref{eq:ed}, and the other is $C_{d, e-1, j-1}\cdot q^{e-1}\cdot (q^{d-j+1}-1)$, coming from setting $i=j-1$ in Equation~\ref{eq:ed}. Then it suffices to show the following. 


\begin{claim}\label{claim:Cdei}
	For $i\in \{0, 1, \dots, e-2\}$, we have
$$
C_{d, e-1, i}\cdot q^{e-1}\cdot (q^{d-i}-1)+C_{d, e-1, i+1}=C_{d, e, i+1}.
$$
\end{claim}
\begin{proof}
To start with, we have that 
\begin{eqnarray*}
& & C_{d, e-1, i}\cdot q^{e-1}\cdot (q^{d-i}-1)+C_{d, e-1, i+1} \\
& = & \qbinom{d}{i}{q}\cdot \qbinom{e-1}{i}{q}\cdot (q^i-1)\cdot\ldots\cdot 
(q^i-q^{i-1})\cdot q^{e-1}\cdot 
(q^{d-i}-1)\\
& & \quad +\qbinom{d}{i+1}{q}\cdot\qbinom{e-1}{i+1}{q}\cdot (q^{i+1}-1)\cdot 
\ldots\cdot (q^{i+1}-q^i)\\
& = & \qbinom{d}{i}{q}\cdot \qbinom{e-1}{i}{q}\cdot (q^{i+1}-q)\cdot\ldots\cdot 
(q^{i+1}-q^i)\cdot q^{e-1-i}\cdot 
(q^{d-i}-1)\\
& & \quad +\qbinom{d}{i+1}{q}\cdot\qbinom{e-1}{i+1}{q}\cdot (q^{i+1}-1)\cdot 
\ldots\cdot (q^{i+1}-q^i) \\
& = & (q^{i+1}-q)\cdot \ldots \cdot (q^{i+1}-q^i) \\
& & \quad
\cdot\Huge(
\qbinom{d}{i}{q}\cdot \qbinom{e-1}{i}{q}\cdot q^{e-1-i}\cdot (q^{d-i}-1)
+
\qbinom{d}{i+1}{q}\cdot\qbinom{e-1}{i+1}{q}\cdot (q^{i+1}-1)
\Huge).
\end{eqnarray*}
Finally, we note (1) $\qbinom{d}{i+1}{q}\cdot(q^{i+1}-1)=\qbinom{d}{i}{q} 
\cdot(q^{d-i}-1)$ and (2) a $q$-Pascal identity $\qbinom{e-1}{i}{q}\cdot 
q^{e-1-i}+\qbinom{e-1}{i+1}{q}=\qbinom{e}{i+1}{q}$. 
These allow us to obtain 
\begin{eqnarray*}
	& & \qbinom{d}{i}{q}\cdot \qbinom{e-1}{i}{q}\cdot q^{e-1-i}\cdot (q^{d-i}-1)
	+
	\qbinom{d}{i+1}{q}\cdot\qbinom{e-1}{i+1}{q}\cdot (q^{i+1}-1) \\
	& = & \qbinom{d}{i+1}{q}\cdot \qbinom{e-1}{i}{q}\cdot q^{e-1-i}\cdot (q^{i+1}-1)
	+
	\qbinom{d}{i+1}{q}\cdot\qbinom{e-1}{i+1}{q}\cdot (q^{i+1}-1) \\
	& = & \qbinom{d}{i+1}{q}\cdot (q^{i+1}-1)\cdot (\qbinom{e-1}{i}{q}\cdot 
	q^{e-1-i}+\qbinom{e-1}{i+1}{q})\\
	& = & \qbinom{d}{i+1}{q}\cdot (q^{i+1}-1)\cdot \qbinom{e}{i+1}{q}.
\end{eqnarray*}
This concludes the proof of Claim~\ref{claim:Cdei}.
\end{proof}
Letting $j=i+1$, we obtain $C_{d, e, j}=C_{d, e-1, j-1}\cdot q^{e-1}\cdot (q^{d-j+1}-1)+C_{d, e-1, j}$. This concludes the proof of Lemma~\ref{lem:Ed}.\qed

\subsubsection*{Acknowledgement.} I would like to thank Yuval Wigderson for his feedback on an early draft of this paper. I would like to express my sincere thanks to the anonymous reviewers for their careful and thoughtful reviews.

\bibliographystyle{plain}
\bibliography{references}

\end{document}